\newcommand{\BQ}{B_{\QQ}}
\newcommand{\BQCI}{B_{\QQ}^{\text{c.i.}}}
\newcommand{\kk}{\Bbbk}
\newcommand{\QQ}{\mathbb Q}
\newcommand{\Qpos}{\QQ_{\geq 0}}
\newcommand{\VV}{\mathbb V}
\newcommand{\Vpos}{\VV_{\geq 0}}
\newcommand{\ZZ}{\mathbb Z}
\newcommand{\Zpos}{\ZZ_{\geq 0}}
\newcommand{\UU}{\mathbb{U}}
\newcommand{\Upos}{\UU_{\geq 0}}
\renewcommand{\vec}[1]{\mathbf{#1}}
\DeclareMathOperator{\codim}{codim}
\DeclareMathOperator{\coker}{coker}
\DeclareMathOperator{\pdim}{pdim}
\DeclareMathOperator{\reg}{reg}
\newtheorem{theorem}{Theorem}
\newtheorem{prop}[theorem]{Proposition}
\newtheorem{lemma}[theorem]{Lemma}
\newtheorem*{question}{Question}
\newtheorem{corollary}[theorem]{Corollary}
\theoremstyle{definition}
\newtheorem{example}[theorem]{Example}
\newtheorem{remark}[theorem]{Remark}
\newtheorem{algo}[theorem]{Algorithm}
\newcommand{\defi}[1]{\textbf{\textit{#1}}}
\subjclass[2010]{13D02, 05E40} 
\title{Rational combinations of Betti diagrams of complete intersections}
\author[Annunziata]{Michael T. Annunziata}
\address{Mike Annunziata, Wake Forest University, 2866 Quincy Drive, Winston-Salem, NC 27106}
\email{annumt12@wfu.edu}
\author[Gibbons]{Courtney R. Gibbons}
\address{Courtney Gibbons, Department of Mathematics, Hamilton College, 198 College Hill Road, Clinton, NY 13323}
\email{crgibbon@hamilton.edu}
\author[Hawkins]{Cole Hawkins}
\address{Cole Hawkins, Amherst College, Amherst, MA 01002}
\email{chawkins16@amherst.edu}
\author[Sutherland]{Alexander J. Sutherland}
\address{Alex Sutherland, Oberlin, OH 44074 Oberlin, OH 44074}
\email{asutherl@oberlin.edu}
\begin{document}

\begin{abstract}
We investigate decompositions of Betti diagrams over a polynomial
ring within the framework of Boij-S\"oderberg theory. 
That is, given a Betti diagram, we determine if it is possible to decompose it into the Betti diagrams of
complete intersections. 
To do so, we determine the extremal rays of the cone generated by the diagrams of complete intersections and provide a factorial time algorithm for decomposition.  

\end{abstract}

\thanks{This research was supported by NSF grant DMS-1460982.  The authors would like to acknowledge David Cox for his insightful suggestions for the decomposition algorithm.}
\maketitle

\section*{Introduction}

Betti diagrams of modules over graded rings are important objects of study in commutative algebra. 
Many numerical invariants can be read off of a Betti diagram; among others, this list includes the projective dimension, regularity, and the Hilbert series of the module.  
Motivated by \cite{GJMRSW}, we restrict our study to Betti diagrams that can be realized as a positive rational combination of Betti diagrams of complete intersection modules.  One reason for this study is that a decomposition of the Betti diagram of a module $M$ into a distinguished class of Betti diagrams can lead to surprising numerical information about $M$ itself that is not immediately evident from its Betti diagram; for an example of this phenomenon, we refer the reader to \cite{McCullough}.
To study these diagrams and decompositions, we use the lens of Boij-S\"oderberg theory.

In the current literature, the blanket term ``Boij-S\"oderberg theory'' refers to the study of the positive rational cone of Betti diagrams of modules over a ring.  
In \cites{boij-sod1}, Boij and S\"oderberg define the \defi{cone of Betti diagrams over a polynomial ring $R$} to be
\[
\BQ(R) = \left \{ \sum_{M} r_M \beta(M) \, \Bigg\vert \,\substack{ {M \text{ a f.g. $R$-module},\, r_M \in \Qpos,}\\{ \text{ and  finitely many $r_M$ are nonzero}}} \right \};
\]
i.e., it is the positive hull of the set of Betti diagrams of finitely generated $R$-modules. Central to the study of this cone is the notion of an extremal ray.  The \defi{ray generated by $\beta(M)$} refers to the positive hull of $\beta(M)$ (intuitively, it is an infinitely long vector in the direction of $\beta(M)$).  A ray $r$ is said to be \defi{extremal} in a cone $C$ provided $r$ is not in the positive hull of $C\setminus r$.  That is, $r$ is not a positive rational combination of other rays in the cone.

In their first paper, Boij and S\"oderberg conjectured a means of decomposing each Betti diagram of a finitely generated Cohen-Macaulay module over a polynomial ring into a positive rational linear combination of those diagrams necessary to generate the cone, and they proved this conjecture in low codimensions.
Eisenbud and Shreyer proved the conjectures in full generality in \cite{eis-schrey1}.  

Our study focuses on Betti diagrams of complete intersection modules over the polynomial ring $R = \kk[x_1,\ldots,x_n]$.  
A \defi{complete intersection $R$-module} is a quotient $M = R(-j)/(f_1,\ldots,f_c)$ where $f_1,\ldots,f_c$ form a homogeneous regular sequence.  
That is, $I = (f_1,\ldots,f_c)$ where $f_j$ is homogeneous for each $j$, $f_1$ is not a zero divisor on $R$, $f_\ell$ is not a zero divisor on $R(-j)/(f_1,\ldots,f_{\ell-1})$ for each $\ell$, and none of the quotients $R(-j)/(f_1,\ldots,f_\ell)$ are zero.
We denote the \defi{cone of Betti diagrams of complete intersection $R$-modules}, denoted $\BQCI(R)$, to be the subcone of $\BQ(R)$ defined by \[
\BQCI(R) =  \left \{ \sum_{M} r_M \beta(M) \, \Bigg\vert \, \substack{ {M \text{ a complete intersection $R$-module},}\\ {r_M \in \QQ_{\geq
    0}, \text{ and finitely many $r_M$ are nonzero}}} \right \}.
\]

The following result identifies the extremal rays of $\BQCI(R)$.

\begin{theorem}\label{thm:main1} Let $\kk$ be any field and let $R = \kk[x_1,\ldots,x_n]$.  A ray $r$ is extremal in the subcone $\BQCI(R)$ if and only if $r$ is generated by the Betti diagram a complete intersection $R$-module.
\end{theorem}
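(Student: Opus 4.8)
The statement is an equivalence whose two directions are of very different character. For the ``only if'' direction I would appeal to a general fact about convex cones: if a ray $r$ is extremal in a cone $C$ that is the positive hull of a set $\mathcal G$ of rays, then $r$ must already lie on a ray in $\mathcal G$. Concretely, writing an extremal $r=\sum_{\ell=1}^{N}r_\ell\,\beta(M_\ell)$ as a finite positive combination of complete intersection diagrams with $N$ minimal, extremality forces each $\beta(M_\ell)$ to lie on the ray $r$ itself; otherwise $r$ would be a positive combination of elements of $C$ off its own ray, contradicting that $r$ is not in the positive hull of $C\setminus r$. Hence $r$ is a positive multiple of a single $\beta(M_\ell)$, the diagram of a complete intersection. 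This argument is purely formal and uses no special structure of the diagrams.

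The content lies in the ``if'' direction: every $\beta(M)$ with $M=R(-j)/(f_1,\dots,f_c)$ a complete intersection generates an extremal ray. I would prove this by showing that whenever $\beta(M)=\sum_\ell r_\ell\,\beta(M_\ell)$ with $r_\ell>0$ and the $M_\ell$ complete intersections, every $\beta(M_\ell)$ is proportional to $\beta(M)$. The first step exploits that complete intersection modules are \emph{cyclic}: $\beta_0(M_\ell)$ is a single $1$ in degree $j_\ell$. Matching the homological-degree-$0$ part of $\beta(M)$ (a single $1$ in degree $j$) and using that all $r_\ell>0$ while Betti numbers are nonnegative forces $j_\ell=j$ for every $\ell$ and $\sum_\ell r_\ell=1$; after a global shift I may assume $j=0$, so each $M_\ell=R/I_\ell$ with $I_\ell$ generated by a regular sequence of forms of degrees $D_\ell=\{e_{\ell,1},\dots,e_{\ell,c_\ell}\}$. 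The second step encodes the whole diagram by a generating function. Since the Koszul complex is the minimal free resolution of a complete intersection, $\beta_{i,m}(M)$ counts the size-$i$ subsets of $D=\{d_1,\dots,d_c\}$ with subset-sum $m$, so
\[
\sum_{i,m}\beta_{i,m}(M)\,s^i t^m=\prod_{k=1}^{c}\bigl(1+s\,t^{d_k}\bigr),
\]
and the decomposition becomes the polynomial identity $\prod_{k}(1+s\,t^{d_k})=\sum_\ell r_\ell\prod_{k}(1+s\,t^{e_{\ell,k}})$ in $\QQ[s,t]$. Reading off the top power of $s$ gives $c_\ell\le c$ for all $\ell$, and then the coefficient of $s^c$ is the single monomial $t^{d_1+\dots+d_c}$; together with $\sum_\ell r_\ell=1$ and nonnegativity this forces every $c_\ell=c$ and $\sum_k e_{\ell,k}=\sum_k d_k$.

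It remains to show the degree multisets coincide, and this is the crux. I would prove, by induction on $c$, the purely combinatorial statement: if finite multisets $D,D_\ell$ of positive integers all of size $c$ satisfy $\prod_{d\in D}(1+s\,t^{d})=\sum_\ell r_\ell\prod_{e\in D_\ell}(1+s\,t^{e})$ with $r_\ell>0$ and $\sum_\ell r_\ell=1$, then every $D_\ell=D$. Write $a=\min D$; the coefficient of $s^c$ again gives $\sum_{e\in D_\ell}e=\sum_{d\in D}d$, and the coefficient of $s^1$, namely $\sum_{d\in D}t^{d}=\sum_\ell r_\ell\sum_{e\in D_\ell}t^{e}$, shows $\min D_\ell\ge a$ by comparing lowest $t$-degrees. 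Clearing the common factor $t^{\sum_d d}$ puts the identity into the monic-in-$s$ form $\prod_{d\in D}(s+t^{-d})=\sum_\ell r_\ell\prod_{e\in D_\ell}(s+t^{-e})$. Now fix a real $t\in(0,1)$ and evaluate at $s_0=-t^{-a}$: the left side vanishes since $a\in D$, while each factor $t^{-e}-t^{-a}$ on the right is $\ge 0$ (as $e\ge a$ and $t<1$), so each summand is nonnegative; since all $r_\ell>0$, every summand must vanish, forcing $a\in D_\ell$ for all $\ell$. Factoring $(1+s\,t^{a})$ out of both sides and cancelling it in the integral domain $\QQ[s,t]$ reduces $c$ to $c-1$, and the induction finishes the proof. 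I expect this inductive evaluation step to be the main obstacle: it is exactly where one must rule out accidental collisions of subset-sums that could let distinct Koszul diagrams average to a single one, and the evaluation-at-the-extreme-root positivity trick is what makes the required rigidity work.
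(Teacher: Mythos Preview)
Your proof is correct, and it shares the overall architecture of the paper's argument: the ``only if'' direction is formal, and for the ``if'' direction both you and the paper induct on the codimension $c$, show that the minimum generator degree must occur in \emph{every} summand, and then cancel that common Koszul factor in an integral domain to drop to codimension $c-1$. In fact, your ring $\QQ[s,t]$ with the generating polynomials $\prod_k(1+st^{d_k})$ is exactly the paper's $(\UU,\odot)$ in disguise, and your cancellation step is the paper's Lemma~\ref{lem:Koszul Lemma} together with Corollary~\ref{cor:goodcor}.

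The genuine difference is in how the crux---``the minimum degree lies in every $D_\ell$''---is established. The paper first clears denominators to recast the equation as $p\gamma=\sum_{k=1}^{p}\beta(\underline a^{(k)})$ with unit integer coefficients, and then runs a \emph{divisibility} argument: if $m$ is the maximal multiplicity of the global minimum $a_1$ among the degree vectors, the Betti entry in position $(m,ma_1)$ of each summand is either $0$ or $1$, so the total lies strictly between $0$ and $p$ unless every degree vector contains $a_1$; but that total must be $p\gamma_{m,ma_1}$, a multiple of $p$. Your argument instead keeps the arbitrary positive rational weights and uses a \emph{positivity} trick: after passing to the monic form $\prod_d(s+t^{-d})=\sum_\ell r_\ell\prod_e(s+t^{-e})$, specializing $t\in(0,1)$ and $s=-t^{-a}$ makes every factor on the right nonnegative while the left vanishes, forcing each product to vanish and hence $a\in D_\ell$.

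What each approach buys: the paper's divisibility trick stays entirely inside integer combinatorics but needs the extra rephrasing step and a careful Betti-entry count. Your specialization works directly with rational coefficients, avoids the rephrasing, and is a clean ``root plus sign'' argument; the price is a brief excursion into real evaluations.
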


One implication is easy to see: if a ray is not generated by the Betti diagram of a complete intersection module, then the definition of $\BQCI(R)$ precludes it from being an extremal ray in $\BQCI(R)$.  The proof of the converse is the focus of \S\ref{sec:main}.

Assuming this theorem, we have that any Betti diagram in $\BQCI(R)$ decomposes into the Betti diagrams of complete intersections. Moreover, for each such Betti diagram in the decomposition, there is a shortcut for finding a decomposition into diagrams lying on extremal rays of larger cone $\BQ(R)$ up to codimension $3$ (see \cite{GJMRSW}) and some cases in codimension $4$ (see \cites{fanny,robert}). 
For example, 
let $M$ be the $R$-module described in Example~\ref{ex:cole}. 
This module is not a complete intersection.  
However, Example~\ref{ex:mikeex} gives a decomposition of $\beta(M)$ into the Betti diagrams of complete intersections:
\[\beta(M) = \beta\left(\kk[x,y]/(x^2,y^2)\right) + \beta\left(\kk[x,y]/(x^2,y^3)\right).\]
Via \cite{GJMRSW}*{Proposition 3.2}, we have a decomposition for the Betti diagrams on the right hand side, and hence we obtain a decomposition of $\beta(M)$ into diagrams spanned by extremal rays of $\BQ(R)$. 
Work on such formulas for higher codimensions is ongoing; as more cases are known, it becomes expedient to first decompose within $\BQCI(R)$ when possible, then use formulas for decomposing each Betti diagram of a complete intersection.  Our decomposition algorithm, Algorithm~\ref{algo:algo}, is implemented and available for examination at \cite{GibbonsWeb}.

We give a short introduction to Betti diagrams, especially those of complete intersections, in \S\ref{sec:background}.
In \S\ref{sec:main}, we prove our main result.  
In \S\ref{sec:decomp}, we provide a factorial time algorithm for deciding whether a diagram is a member of the cone.  
If so, the algorithm returns at least one decomposition of the diagram into a positive rational combination of Betti diagrams of complete intersections.
We outline further avenues of inquiry in \S\ref{sec:futurework}.

\section{Background}\label{sec:background}

The \defi{Betti diagram} of $M$, denoted $\beta(M)$, is defined to be the matrix for which $\beta_{i,i+j}(M)$ occurs in column $i$ and row $j$.
This indexing convention stems from the commutative algebra software Macaulay2 \cite{M2}.  
When we display a Betti diagram, the top left entry is $\beta_{0,0}(M)$ unless otherwise noted, and a zero in the table is represented by $-$; 
thus, when $M$ is a module finitely generated in degrees $0$ and higher over a polynomial ring $R$ in $n$ variables, Hilbert's Syzygy Theorem\footnote{For a full treatment, see \cite{eisenbook}*{Chapter 19} or \cite{peeva}*{Chapter 15}.} yields 
\[
\beta(M) = 
\scalebox{.7}{$
\begin{bmatrix}
    \beta_{0,0}(M) & \beta_{1,1}(M) &  \beta_{2,2}(M) & \cdots & \beta_{n,n}(M) & - & \cdots
    \\
    \beta_{0,1}(M) & \beta_{1,2}(M) &  \beta_{2,3}(M) & \cdots & \beta_{n,n+1}(M) & - & \cdots\\
    \vdots &\vdots & \vdots & \vdots & & \vdots & \\
     \beta_{0,j}(M) & \beta_{1,1+j}(M) &  \beta_{2,2+j}(M) & \cdots & \beta_{n,n+j}(M) & - & \cdots\\
      \vdots &\vdots & \vdots & \vdots & & \vdots & \\
  \end{bmatrix}$}.
\]  
The entries of such a diagram, called \defi{Betti numbers}, encode the ranks and twists of each free module in a minimal free resolution of $M$.  
We suppress the output of those columns consisting entirely of zeros for brevity.
The \defi{total Betti numbers of $M$} are the sums of the columns of $\beta(M)$;
the $i$-th total Betti number of $M$ is $\beta_i(M) = \sum_j \beta_{i,j}(M)$.

Let $\VV$ denote the $\QQ$-vector space space of column- and row-finite $\Zpos \times \ZZ$-indexed matrices with entries in $\QQ$, 
and let $\UU$ denote the subset of such matrices with integer entries.  
Betti diagrams of finitely generated $R$-modules are elements of $\UU$, and therefore elements of $\VV$.  
For $\gamma \in \VV$, the notation $\gamma_{i,i+j}$ refers to the entry in column $i$ and row $j$, as is the case with Betti diagrams. 
We abuse notation slightly and, for a diagram $\gamma \in \Vpos$, we define the \defi{projective dimension of $\gamma$} to be $\pdim{\gamma} = \max\{ i \, | \, \exists j \, : \, \gamma_{i,j+i} \not = 0\}$.  
In a similarly abusive fashion, we define the \defi{regularity of $\gamma$} to be $\reg(\gamma) = \max\{j \, | \, \exists i \, : \, \gamma_{i,j+i} \not = 0\}$ (since $\gamma \in \VV$ has finitely many nonzero entries, $\pdim(\gamma)$ and $\reg(\gamma)$ are both finite).
Intuitively, $\pdim(\gamma)$ is the width of $\gamma$ and $\reg(\gamma)$ is the height of $\gamma$.  
For calculations and examples, we tend to associate $\gamma$ with its image in the finite dimensional subspace $\QQ^{\pdim(\gamma)} \times \QQ^{\reg(\gamma)} \subseteq \VV$.

Let $M = R(-a_0)/(f_1,\ldots,f_c)$ be a complete intersection $R$-module.  We say that $c$ is the \defi{codimension of $M$}. 

\begin{remark}\label{rmk:Koszul} Let $a_\ell$ be the degree of $f_\ell$.  The minimal free resolution of $M$ is given by a twist of the Koszul complex $K(f_1,\ldots,f_c)_\bullet \otimes R(-a_0)$:  
\[0 \longleftarrow K_0 \longleftarrow K_1 \longleftarrow \cdots \longleftarrow K_c \longleftarrow 0, \]
where $K_0 = R(-j)$ and $K_i$ is the rank $\binom{c}{i}$ free module given by \[K_i = \bigoplus_{\text{distinct } \ell_k} R\left(-a_0-(a_{\ell_1} + \cdots +a_{\ell_i})\right).\]  
We refer the reader to \cite{eisenbook}*{Proposition 1.15} or \cite{peeva}*{Chapter 14} for a description of the differential of this complex; 
we omit it in this discussion because we do not use it directly.  
Observe that a complete intersection of codimension $c$ with twist $a_0$ and generators of degrees $a_1,\ldots,a_c$ has projective dimension $c$ and regularity $h =\sum_{k = 0}^c a_k - c$.  That is, the nonzero entries of its Betti diagrams occur within columns $0$ through $c$ and rows $0$ through $h$.
The symmetry of the binomial coefficients is reproduced in the total Betti numbers of the complete intersection, which is a useful observation.

It is also useful to note the Koszul complex can be realized as a tensor product of smaller Koszul complexes; indeed, $K(f_1,\ldots,f_c) = K(f_1,\ldots,f_{c-1})\otimes K(f_c)$. 
Since this multiplication is commutative, we may assume without loss of generality that the elements of the regular sequence $f_1,\ldots,f_c$ are listed so that their degrees are nondecreasing.  Although this may lead to different orderings of the forms themselves, it is not the precise forms in the homogeneous regular sequence that contribute to the numerics, but rather their degrees and the twist $j$.\end{remark}

Thus, we define the \defi{determining vector} of the complete intersection $M = R(-a_0)/(f_1,\ldots,f_c)$ to be the tuple $\underline a = (a_0,a_1,\ldots,a_c)$ where $a_\ell=\deg(f_\ell)$ for $\ell \geq 1$ and $a_1 \leq a_2 \leq \cdots \leq a_c$,  and we refer to its Betti diagram as $\beta(\underline a)$.

\begin{remark}\label{rmk:cibetti} The entry in column $i$ and row $j$ of the Betti table of a complete intersection with determining vector $\underline a = (a_0,a_1,\ldots,a_c)$ represents the number of collections of $i$ distinct entries $a_1,\dots,a_n$ that sum to $j+a_0$.

For example,
\[
\beta(1,2,2,3) = \begin{bmatrix}
- & - & - & - \\
1 & - & - & - \\
- & 2 & - & - \\
- & 1 & 1 & - \\
- & - & 2 & - \\
- & - & - & 1 \\
\end{bmatrix}.
\]
\end{remark}

\begin{example}\label{ex:cole} It's interesting to note that $\BQCI(R)$ contains rays that span Betti diagrams of modules that are not complete intersections or direct sums thereof.  
Indeed, consider $M  = \coker \scalebox{.7}{$\begin{pmatrix} xy-y^2 & x^2 + y^2 & -y^2 & 0 \\ -y^3 & y^3 & x^2y - y^3 & x^2 -y^2 \end{pmatrix}$}$.  
This module is not isomorphic to a direct sum of complete intersections as its presentation matrix cannot be written as a product of matrices $PAQ$ where $P$ and $Q$ are invertible, and where $A$ is a direct sum of row vectors (i.e., the presentation matrices of complete intersection modules), each of whose entries form a homogeneous regular sequence.  
Left multiplication by the invertible matrix $P$ amounts to a series of row operations on $A$ and right multiplication by $Q$ similarly amounts to a series of column operations on $A$.  
No series of operations on the matrix above yields such a matrix.

However, $\beta(M) = \beta(0,2,2) + \beta(0,2,3)$; we describe how to calculate such a decomposition in Section~\ref{sec:decomp}.
\end{example}

\section{Extremal rays in the subcone generated by complete intersections}\label{sec:main}

Let $\kk$ be a field and let $R = \kk[x_1,\ldots, x_n]$. 
In the following, we consider complete intersection $R$-modules of codimension $c \leq n$.

Let $\gamma$ be the Betti diagram of a codimension $c$ complete intersection $R$-module.  
In order to index a list of determining vectors, we use parenthesized superscripts. Theorem~\ref{thm:main1} concerns the equation
\begin{equation}\label{eq:good}
 \gamma=\sum\limits_{k=1}^{\ell} r_k\beta\left(a_0^{(k)},a_1^{(k)},\ldots,a_{c_k}^{(k)}\right) 
\end{equation}
where each coefficient $r_k$ is a positive rational number.  
To prove Theorem~\ref{thm:main1}, it is enough to show that only one distinct determining vector may appear on the right hand side.  Subsequently, by clearing denominators and re-indexing we can rephrase Theorem~\ref{thm:main1} as follows:

\begin{theorem}\label{thm:rephrasing} Let $\gamma$ be the Betti diagram of a complete intersection.  \\
For $p\in\mathbb{Z}_{\geq 0}$, if $p\gamma=\sum\limits_{k=1}^{p} \beta(\underline a^{(k)})$ for determining vectors $\underline a^{(1)}, \underline a^{(2)}, \dots, \underline a^{(p)}$ then $\beta(\underline a^{(k)})= \gamma$ for all $k$.
\end{theorem}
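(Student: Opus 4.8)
The plan is to translate the additive statement of Theorem~\ref{thm:rephrasing} into a multiplicative one by encoding each Betti diagram of a complete intersection as a polynomial. Using the tensor factorization of the Koszul complex recorded in Remark~\ref{rmk:Koszul}, to the determining vector $\underline a=(a_0,a_1,\dots,a_c)$ I would attach
\[
\Phi_{\underline a}(x,y)=x^{a_0}\prod_{\ell=1}^{c}\bigl(1+y\,x^{a_\ell}\bigr)\in\QQ[x,y],
\]
so that the coefficient of $y^{i}x^{i+j}$ is exactly $\beta_{i,i+j}(\underline a)$ (compare Remark~\ref{rmk:cibetti}). Since $\underline a$ can be read back off of $\Phi_{\underline a}$, the assignment $\underline a\mapsto\Phi_{\underline a}$ is injective, and the hypothesis $p\gamma=\sum_{k}\beta(\underline a^{(k)})$ becomes the single polynomial identity $p\,\Phi_{\underline a}=\sum_{k=1}^{p}\Phi_{\underline a^{(k)}}$. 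Everything in sight has nonnegative coefficients, which is the feature I would exploit repeatedly.

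First I would pin down the coarse invariants by reading off coefficients, each time using that no cancellation can occur among nonnegative terms. The coefficient of $y^{c}$ forces every summand to have codimension $c$ (evaluated at $x=1$, the number of summands with $c_k=c$ must be $p$, and there are only $p$ of them), the coefficient of $y^{0}$ forces $a_0^{(k)}=a_0$ for all $k$, and the coefficient of $y^{1}$ (the ``column $1$'' data) shows that the smallest exponent occurring anywhere is $a_1=\min\{a_1,\dots,a_c\}$, so every $a_\ell^{(k)}\ge a_1$. After dividing by $x^{a_0}$ this reduces the theorem to the following clean statement, which I would prove by induction on $c$: if $\sum_{k=1}^{p}\prod_{b\in B^{(k)}}(1+y\,x^{b})=p\prod_{b\in B}(1+y\,x^{b})$ for size-$c$ multisets $B,B^{(1)},\dots,B^{(p)}$ of nonnegative integers, then $B^{(k)}=B$ for all $k$.

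The key step --- and the part I expect to be the real obstacle --- is the inductive reduction of $c$, which I would drive by evaluating the identity at $y=-x^{-a_1}$, where $a_1=\min B$. Because $a_1$ occurs in $B$, the factor $1-x^{\,a_1-a_1}=0$ makes the right-hand side vanish, giving $\sum_{k}\prod_{b\in B^{(k)}}(1-x^{\,b-a_1})=0$; since every element of every $B^{(k)}$ is at least $a_1$ (again by the coefficient of $y^{1}$), all exponents $b-a_1$ are nonnegative, so each summand is an honest polynomial whose constant term is $1$ when $a_1\notin B^{(k)}$ and $0$ when $a_1\in B^{(k)}$. Matching constant terms forces $a_1\in B^{(k)}$ for every $k$. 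Now every product on both sides carries the common factor $(1+y\,x^{a_1})$, which is a nonzerodivisor; cancelling it yields the same identity for the size-$(c-1)$ multisets $B\setminus\{a_1\}$ and $B^{(k)}\setminus\{a_1\}$, and the inductive hypothesis finishes the argument (the base case $c=0$ being the trivial identity $p=p$). Tracing back through the encoding gives $\underline a^{(k)}=\underline a$, hence $\beta(\underline a^{(k)})=\gamma$, which is Theorem~\ref{thm:rephrasing}. The crux is really the evaluation $y=-x^{-a_1}$: it converts the global nonnegativity into the pointwise conclusion that each summand shares the lowest-degree generator with $\gamma$, and thereby unlocks the induction.
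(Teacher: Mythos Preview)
Your argument is correct. The overall architecture matches the paper's: induct on the codimension, show that every determining vector in the sum contains the minimum degree $a_1$, factor it out, and descend. Your polynomial encoding $\Phi_{\underline a}(x,y)=x^{a_0}\prod_\ell(1+yx^{a_\ell})$ is exactly the generating function underlying the paper's $\odot$ product (Proposition~\ref{prop:ID}, Lemma~\ref{lem:Koszul Lemma}), so the cancellation step is the same in both proofs, just phrased in $\QQ[x,y]$ rather than in $(\UU,\odot)$.

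The genuine difference is the key step showing $a_1\in B^{(k)}$ for every $k$. The paper looks at a single Betti entry: with $m$ the maximal multiplicity of $a_1$ among the $\underline a^{(k)}$, the entry in position $(m,ma_1)$ of each summand is $0$ or $1$, and if some summand omits $a_1$ the total lies strictly between $0$ and $p$, contradicting that it equals $p\gamma_{m,ma_1-m}$. You instead specialize at $y=-x^{-a_1}$, killing the right side and forcing the constant terms on the left (each $0$ or $1$) to sum to zero. Your route avoids tracking the multiplicity $m$ and the divisibility-by-$p$ bookkeeping, at the cost of a substitution that momentarily leaves the polynomial ring (Laurent in $x$) before landing back in polynomials; the paper's route stays inside the Betti table but needs Remark~\ref{rmk:cibetti} to interpret the entry combinatorially. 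Both arguments yield the same inductive reduction.
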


We record several results before proving this theorem.  

\begin{lemma}\label{lem:redux} Let $\gamma = \beta\left(a_0,a_1,\ldots,a_c\right)$.  Let $\underline{a}^{(k)}$ be as in the statement of Equation~\eqref{eq:good}.  Then, for all $k$, 
\begin{enumerate}
\item $a_0^{(k)} = a_0$ and $c_k = c$, and
\item $\displaystyle\sum_{j = 1}^c a^{(k)}_j = \displaystyle\sum_{j = 1}^c a_j$.
\end{enumerate}
\end{lemma}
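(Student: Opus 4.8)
The plan is to exploit the fact that, because every Betti diagram has nonnegative entries and every $r_k$ is positive, there is no cancellation in Equation~\eqref{eq:good}: an entry of $\gamma$ vanishes if and only if the corresponding entry of each summand vanishes. Consequently the support of $\gamma$ is the union of the supports of the $\beta(\underline a^{(k)})$, so in particular $\pdim(\gamma) = \max_k c_k$ and $\reg(\gamma) = \max_k h_k$, writing $h_k = \reg(\beta(\underline a^{(k)}))$. The strategy is then to read off the claimed equalities from the two extreme columns, columns $0$ and $c$, where the Koszul shape of Remark~\ref{rmk:Koszul} forces each summand to contribute a single, easily located entry. First I would prove (1a). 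Since $\beta(\underline a^{(k)})$ resolves a cyclic module generated in degree $a_0^{(k)}$, its entire column $0$ is a single $1$ in row $a_0^{(k)}$, and column $0$ of $\gamma$ is a single $1$ in row $a_0$. Comparing column $0$ entry-by-entry gives $\sum_{k \,:\, a_0^{(k)} = j} r_k = \gamma_{0,j}$, which vanishes unless $j = a_0$; positivity of the $r_k$ then forces $a_0^{(k)} = a_0$ for every $k$, and reading off row $a_0$ gives the normalization $\sum_k r_k = 1$.

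Next I would prove (1b), that $c_k = c$. From $\pdim(\gamma) = c = \max_k c_k$ we already get $c_k \leq c$ for all $k$. The key step is to compare the top total Betti numbers: since $\beta_c(\beta(\underline a^{(k)})) = \binom{c_k}{c}$ equals $1$ when $c_k = c$ and $0$ when $c_k < c$, summing column $c$ over all rows yields $1 = \beta_c(\gamma) = \sum_k r_k \binom{c_k}{c} = \sum_{k \,:\, c_k = c} r_k$. Together with $\sum_k r_k = 1$ and positivity, this forces $c_k = c$ for every $k$. Equivalently, one may pass to $K$-polynomials, using (1a) to write $t^{a_0}\prod_{i=1}^{c}(1 - t^{a_i}) = \sum_k r_k\, t^{a_0}\prod_{i=1}^{c_k}(1 - t^{a_i^{(k)}})$, and compare orders of vanishing at $t = 1$: the left side vanishes to order exactly $c$, while the right side vanishes to order $\min_k c_k$ because the leading coefficients $r_k\prod_i a_i^{(k)}$ are all positive and cannot cancel.

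Finally I would prove (2). Having established $c_k = c$ and $a_0^{(k)} = a_0$, column $c$ of each $\beta(\underline a^{(k)})$ consists of the single top Koszul syzygy: a $1$ in row $h_k = a_0 + \sum_{j=1}^c a_j^{(k)} - c$. Column $c$ of $\gamma$ is likewise a single $1$, in row $h = a_0 + \sum_{j=1}^c a_j - c$. Matching column $c$ row-by-row gives $\sum_{k \,:\, h_k = j} r_k = \gamma_{c,c+j}$, which vanishes unless $j = h$; positivity again forces $h_k = h$ for all $k$, and this is exactly the assertion $\sum_{j=1}^c a_j^{(k)} = \sum_{j=1}^c a_j$.

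I expect the main obstacle to be step (1b), ruling out summands of strictly smaller codimension. The elementary support and projective-dimension bookkeeping only yields $c_k \leq c$ together with $\max_k c_k = c$, which does not by itself exclude some $c_k < c$; the crux is to notice that the extreme invariant $\beta_c$ (equivalently, the order of vanishing of the $K$-polynomial at $t = 1$) is rigid under nonnegative combinations, so no strictly smaller codimension can appear. Once the codimensions are pinned down, steps (1a) and (2) reduce to single-column comparisons where positivity does all the work.
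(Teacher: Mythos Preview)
Your proof is correct and follows essentially the same approach as the paper's: both read off the constraints from columns $0$ and $c$ using positivity of the coefficients. Your version is slightly cleaner—working directly with the rational coefficients and the normalization $\sum_k r_k = 1$, and handling $c_k = c$ via the total Betti number $\beta_c$, whereas the paper passes to the integer reformulation with $p$ summands—but the underlying idea is the same.
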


\begin{proof} Without loss of generality, assume $a_0 = 0$.  
If $a_0^{(k)} \not = 0$, then $\gamma$ has two nonzero entries in column 0; this contradicts Remark~\ref{rmk:cibetti}. 
Thus, we may assume $a_0^{(k)} = 0$ for all $k$.  
If there exists $k$ for which $c_k > c$, then column $c_k$ of $\gamma$ has a nonzero entry; 
this contradicts that $c$ is the codimension of $\gamma$.  
Thus, we may assume $c_k \leq c$ for all $k$.  
Finally, if there exists $k$ for which $c_k < \gamma$, then $\gamma_{c,c+a_1+\dotsb+a_c} < p$; this contradicts Remark~\ref{rmk:cibetti}. 

For the second claim, note that if $\displaystyle\sum_{j = 1}^c a^{(k)}_j \neq \displaystyle\sum_{j = 1}^c a_j$, then $\gamma$ has two nonzero entries in column $c$; this is again a contradiction to Remark~\ref{rmk:cibetti}.
\end{proof}

In \cite{GJMRSW}, the authors define the operation $\odot$ on $\VV$ via \[
{(\alpha \odot \beta)}_{i,j} := \displaystyle\sum_{\substack{i_1 + i_2 = i \\ j_1 + j_2 = j}} \alpha_{i_1,j_1}\beta_{i_2,j_2}.
 \]

\begin{prop}\label{prop:ID}
The set $\UU$ is the additive closure of \[
\left\{\beta(M) \, | \, M \, \text{is an $R$-module}\}\cup\{-\beta(M) \, | \, M \, \text{is an $R$-module}.\right\}.
\] 
Moreover, $\UU$ is an integral domain.
\end{prop}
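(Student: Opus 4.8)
The plan is to treat the two assertions separately: first that the signed (integer) span of the diagrams $\beta(M)$ exhausts $\UU$, and then that $\UU$ with the convolution $\odot$ has no zero divisors. For the first, I would reduce to producing, for each admissible index, the single-entry diagram $\epsilon_{i,d}$ (the element of $\UU$ whose only nonzero entry is a $1$ in column $i$ and total degree $d$). Since $\UU$ is spanned as an abelian group by these $\epsilon_{i,d}$, and since every $\beta(M)$ has projective dimension at most $n$ and so is supported in columns $0,\dots,n$, it suffices to realize each $\epsilon_{i,d}$ with $0\le i\le n$ and $d\in\ZZ$ as a $\ZZ$-linear combination of Betti diagrams of complete intersections. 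The presence of the negatives $-\beta(M)$ among the generators is exactly what is needed here: isolating a single entry forces cancellation, so positive combinations alone would not suffice.

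For the generation I would induct on the column index $i$. The base case $i=0$ is immediate, since $\epsilon_{0,d}=\beta(R(-d))$ is the diagram of a free module (determining vector $(d)$). For the inductive step, consider the complete intersection $R(-(d-i))/(x_1,\dots,x_i)$, whose determining vector is $(d-i,1,\dots,1)$ with $i$ ones; this is a genuine codimension-$i$ complete intersection because $x_1,\dots,x_i$ is a regular sequence for $i\le n$. By the combinatorial description of complete-intersection Betti numbers in Remark~\ref{rmk:cibetti}, a size-$k$ subcollection of $\{1,\dots,1\}$ sums to $k$, so
\[
\beta(d-i,\underbrace{1,\dots,1}_{i}) \;=\; \sum_{k=0}^{i}\binom{i}{k}\,\epsilon_{k,\,(d-i)+k}.
\]
The top term is exactly $\epsilon_{i,d}$, while every lower term lies in columns $k<i$ and hence, by the inductive hypothesis, in the span. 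Solving for the top term expresses $\epsilon_{i,d}$ as a $\ZZ$-combination of $\beta(d-i,1,\dots,1)$ and diagrams already known to be in the span. Because the twist $a_0=d-i$ is unconstrained, this reaches every total degree $d\in\ZZ$, completing the induction and the first assertion.

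For the integral-domain statement I would argue directly with a lowest-column valuation, having first noted that $\odot$ is a commutative, associative convolution with identity $\beta(R)=\epsilon_{0,0}$ (the picture to keep in mind is the identification $\gamma\mapsto\sum_{i,d}\gamma_{i,d}\,s^{i}t^{d}$, under which $\odot$ becomes multiplication in a Laurent-type ring $\ZZ[s,t,t^{-1}]$). For a nonzero $\gamma\in\UU$ set $v(\gamma)=\min\{\, i : \text{column } i \text{ of } \gamma \text{ is nonzero}\,\}$, which exists because $\Zpos$ is well ordered and each column is a Laurent polynomial in $t$ with integer coefficients. If $\alpha,\beta\in\UU$ are nonzero, then every column of $\alpha\odot\beta$ below position $v(\alpha)+v(\beta)$ vanishes, and the column in position $v(\alpha)+v(\beta)$ is precisely the product, in $\ZZ[t,t^{-1}]$, of column $v(\alpha)$ of $\alpha$ with column $v(\beta)$ of $\beta$. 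Since $\ZZ[t,t^{-1}]$ is a domain and both factors are nonzero, that column is nonzero; hence $\alpha\odot\beta\neq 0$ and $\UU$ has no zero divisors.

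I expect the main obstacle to be the first assertion rather than the second. The domain property is essentially formal once the right valuation is isolated, but the generation argument requires care: one must check that each target position $(i,d)$ really is the top Koszul entry of an honest complete intersection over the fixed ring $R$ (so that the regular sequence $x_1,\dots,x_i$ exists and the twist can be chosen freely), and one must organize the induction so that the subtracted lower-column diagrams are provably already in the span. Confirming that the single-entry diagrams $\epsilon_{i,d}$ with $0\le i\le n$ genuinely exhaust $\UU$ — equivalently, that nothing outside the first $n+1$ columns can occur among Betti diagrams — is the step I would treat most carefully.
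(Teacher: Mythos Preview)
Your approach is correct and mirrors the paper's: both obtain the single-entry diagrams by induction on the column index using the Koszul diagrams $\beta(a_0,1,\dots,1)$ (the paper writes $\delta^{(i)}$ for your $\epsilon_{i,i}$ and then shifts by $\odot\,\beta(R(-j))$), and both prove the domain assertion by a corner/leading-term argument, the only cosmetic difference being that the paper isolates the entry in the \emph{largest} nonzero column and largest degree therein, whereas you take the smallest nonzero column and invoke that $\ZZ[t,t^{-1}]$ is a domain. The concern you flag at the end---whether $\UU$ can have entries in columns beyond $n$---is a genuine tension in the paper's own definitions that its proof does not address either; it simply asserts that the shifted $\delta^{(i)}$ form a basis.
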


\begin{proof}  Given a Betti diagram $\beta$, formally define $-\beta$ by $(-\beta)_{i,j} = -({\beta}_{i,j})$.  
Assume that for each $1 \leq i \leq n-1$ we have a diagram $\delta^{(i)}$ such that ${\delta^{(i)}}_{i,i} = 1$ and all other entries of $\delta^{(i)}$ are $0$;  
when $i  = 0$, $\delta^{(i)} = \beta(R)$.  
Suppose $1 \leq c < n$.  
Given the determining vector $\underline a = (0,\underbrace{1,\dots,1}_{c+1 \, \text{times}})$, we have that \[
\beta_{i,j}(\underline a) = \begin{cases}
\binom{c+1}{i} & \text{if} \, j=i \\ 0 & \text{otherwise}.
\end{cases}
\]
It follows that $\delta^{(c+1)} = \beta(\underline a) - \displaystyle\sum_{i=0}^{c} \binom{c+1}{i}\delta^{(i)}$, so the claim holds by induction.\\ 
Note that $\left\{\delta^{(i)}\odot\beta(R(-j)) \, | \, i \in \mathbb{N}_0 \text{ and } j \in \mathbb{Z}\right\}$ is a basis for $\UU$.
This shows that $\UU$ is the additive closure of the desired set.

Evidently, $\UU$ is an abelian group under addition.  
Since the product of Betti diagrams has finitely many nonzero entries, it follows that this product is in $\UU$. 
Therefore, $\UU$ is closed under multiplication.  
It follows immediately from the definition of $\odot$ that $\beta(R)$ is the multiplicative identity of $\UU$ and that $\odot$ is commutative, associative, and distributive over Betti table addition.  

Finally, we show that $\UU$ has no zero divisors.  
Assume that there exist nonzero $\beta,\beta'\in \UU$ such that $\beta \odot \beta'=0$. 
Let $p$ be the smallest integer such that $\beta_{p+1,j}$ is $0$ for all $j$. 
Then, there exist nonzero entries in $\beta_{p,j}$. Let $q$ be the largest integer such that $\beta_{p,q}$ is nonzero. Define $p', q'$ similarly for $\beta'$.

By definition of $\odot$,
\begin{align*}
(\beta\odot{\beta'})_{p+p',q+q'} &= \displaystyle\sum_{\substack{i_1 + i_2 = p+p' \\ j_1 + j_2 = q+q'}} \beta_{i_1,j_1}{\beta'}_{i_2,j_2} \\
&= \beta_{p,q}{\beta'}_{p',q'} + \displaystyle\sum_{\substack{i_1 + i_2 = p+p' \\ j_1 + j_2 = q+q' \\  (i_1,j_1) \neq (p,q) \\ (i_2,j_2) \neq (p',q')}} \beta_{i_1,j_1}{\beta'}_{i_2,j_2} \\
&= \beta_{p,q}{\beta'}_{p',q'} \not = 0.
\end{align*}
\end{proof}

\begin{corollary}\label{cor:goodcor} The multiplication $\odot$ is cancellative; that is, whenever $\beta$ is nonzero and $\alpha \odot \beta = \omega \odot \beta$, we have that $\alpha = \omega$. \hfill $\qed$ \end{corollary}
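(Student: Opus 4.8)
The plan is to derive cancellation directly from the fact, just established in Proposition~\ref{prop:ID}, that $\UU$ is an integral domain. The key observation is that $(\UU, \odot)$ is a commutative ring in which $\beta(R)$ serves as the multiplicative identity and which has no zero divisors; cancellation is then a formal consequence of the absence of zero divisors, exactly as in any integral domain.

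First I would rewrite the hypothesis $\alpha \odot \beta = \omega \odot \beta$. Since $\UU$ is an abelian group under addition, the difference $\alpha - \omega$ is well-defined, and since $\odot$ distributes over Betti table addition (also recorded in Proposition~\ref{prop:ID}), the hypothesis is equivalent to $(\alpha - \omega) \odot \beta = 0$. Next I would invoke the no-zero-divisor property: because $\beta$ is nonzero by assumption and $\UU$ has no zero divisors, the equation $(\alpha - \omega) \odot \beta = 0$ forces $\alpha - \omega = 0$, that is, $\alpha = \omega$.

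I do not anticipate any genuine obstacle here, since the entire content of the corollary is carried by Proposition~\ref{prop:ID}, and the argument is the standard one showing that integral domains satisfy cancellation. The only point requiring a moment's care is ensuring that $\alpha$ and $\omega$ lie in $\UU$ so that the subtraction and the integral-domain conclusion both apply. If one instead wishes to assert cancellation over the ambient rational vector space $\VV$, the same computation goes through verbatim: the product formula defining $\odot$ and the resulting zero-divisor argument (which isolates a single nonzero product $\beta_{p,q}\,\beta'_{p',q'}$ in the top-degree corner) are insensitive to whether the matrix entries are integers or rationals, so no additional work is needed.
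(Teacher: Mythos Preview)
Your proposal is correct and matches the paper's intent: the paper marks the corollary with an immediate $\qed$, treating it as a direct consequence of Proposition~\ref{prop:ID}, and your argument---rewriting the hypothesis as $(\alpha-\omega)\odot\beta=0$ and invoking the absence of zero divisors---is exactly the standard deduction that the $\qed$ is meant to indicate. Your additional remark that the same zero-divisor computation extends verbatim to $\VV$ is also sound and a reasonable clarification.
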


The following fact is given in passing in \cite{GJMRSW}*{Definition 5.1}.
\begin{lemma}\label{lem:Koszul Lemma}
Let $\underline{a} = (a_0,a_1,\dots,a_n)$. Then \[\beta(\underline{a}) = \beta(a_0,a_1,\dotsc,a_{i-1},a_{i+1},\dotsc,a_n) \odot \beta(a_0,a_i).\]
\end{lemma}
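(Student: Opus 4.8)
The plan is to verify the identity entry by entry, combining the combinatorial description of the Betti numbers of a complete intersection (Remark~\ref{rmk:cibetti}) with the definition of $\odot$. The key simplification is that the right-hand factor $\beta(a_0,a_i)$ is supported on only two positions, so the convolution defining $\odot$ collapses to a two-term sum.

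First I would reduce to the normalized case $a_0 = 0$, exactly as in the proof of Lemma~\ref{lem:redux}; this guarantees that the twist is carried by a single factor and is not double-counted in the product. Under this normalization $\beta(a_0,a_i)$ has nonzero entries only in column $0$ at total degree $0$ and in column $1$ at total degree $a_i$. Expanding $\left(\beta(a_0,\dots,\widehat{a_i},\dots,a_n)\odot\beta(a_0,a_i)\right)_{K,D}$ against these two entries leaves precisely the sum $\beta(a_0,\dots,\widehat{a_i},\dots,a_n)_{K,D} + \beta(a_0,\dots,\widehat{a_i},\dots,a_n)_{K-1,\,D-a_i}$.

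The heart of the argument is then a bijection. By Remark~\ref{rmk:cibetti}, $\beta(\underline a)_{K,D}$ counts the $K$-element sets of generator indices whose degrees sum to $D$. Partitioning these sets according to whether they contain the index $i$ matches the two summands above: the sets omitting $i$ are counted by the first term, while the sets containing $i$ biject, by deleting $i$, with the $(K-1)$-element sets of the remaining indices whose degrees sum to $D - a_i$, counted by the second term. This identifies the two sides in every entry and completes the proof.

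Conceptually, the statement is just the Betti-diagram shadow of the Koszul factorization $K(f_1,\dots,f_n) = K(f_1,\dots,\widehat{f_i},\dots,f_n) \otimes K(f_i)$ recorded in Remark~\ref{rmk:Koszul}: tensoring minimal resolutions convolves (i.e., $\odot$-multiplies) their Betti diagrams, and no cancellation occurs because the Koszul complex of a regular sequence is already minimal. I do not expect a serious obstacle; the only point that genuinely demands care is the bookkeeping of the twist $a_0$, which is exactly why I would normalize it away at the outset.
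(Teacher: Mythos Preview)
The paper does not prove this lemma at all; it simply records it as a fact ``given in passing in \cite{GJMRSW}*{Definition 5.1}.'' Your combinatorial verification---expanding the $\odot$-product against the two nonzero entries of $\beta(0,a_i)$ and then partitioning the $K$-element index sets counted by $\beta(\underline a)_{K,D}$ according to whether they contain $i$---is correct and is exactly the elementary argument one would supply here. The tensor-of-Koszul-complexes remark you append is an equally valid conceptual proof.

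One small caveat on your normalization step. You phrase it as a \emph{reduction} to $a_0=0$, but the identity as literally written in the paper actually holds only when $a_0=0$ (equivalently, when just one of the two factors carries the twist): since $\odot$ is a convolution in the degree index, shifting each factor by $a_0$ shifts the product by $2a_0$, so for $a_0\neq 0$ the right-hand side has its column-$0$ entry in degree $2a_0$ rather than $a_0$. Your instinct that the twist must not be double-counted is therefore exactly right, but it is not a reduction---the $a_0=0$ case \emph{is} the statement, and the general case as printed does not follow from it (indeed it is false). This is harmless for the paper's purposes, since Lemma~\ref{lem:redux} has already forced $a_0=0$ before Lemma~\ref{lem:Koszul Lemma} is ever invoked.
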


We now prove Theorem \ref{thm:rephrasing}.

\begin{proof}[Proof of Theorem~\ref{thm:rephrasing}]  
We proceed by induction on the codimension $c$ of the complete intersection. 
Observe that when $c = 1$, the assertion follows immediately.  
The case $c =2$ is instructive, so we prove this as the basis for our induction.
 
Without loss of generality, we assume $\gamma = \beta(0,a_1,a_2)$ and for a fixed $i$ we write $(\gamma_{i})$ to denote the $i$-th column of $\gamma$.
Lemma \ref{lem:redux} allows us to assume that for each determining vector in Equation~\eqref{eq:good}, we have that $\underline{a}^{(k)} = (0,a^{(k)}_1,a^{(k)}_2)$.  
If $(\gamma_{1})$ has only one nonzero entry, then $a_1 = a_2$, and hence $a^{(k)}_1 = a^{(k)}_2$.  
Then, all determining vectors in the summation must  equal $(0,a_1,a_1)$ or else $(\gamma_{2})$ contains two nonzero entries. 

If $(\gamma_{1})$ has two distinct entries then $a_1 < a_2$.  
If determining vectors distinct from $(0,a_1,a_2)$ appear in our summation, then one of the following  pairs of determining vectors must appear in our summation: $(0,a_1,a_1)$ and $(0,a_1,a_2)$, $(0,a_1,a_2)$ and $(0,a_2,a_2)$, or $(0,a_1,a_1)$ and $(0,a_2,a_2)$.  
In all three cases, two nonzero entries must appear in the second column of $\gamma$.  

We now assume our theorem in the case of codimension $c-1$ where $1 < c \leq n$.

\begin{equation*}
p\gamma=\sum\limits_{k=1}^{p} \beta(a^{(k)})
\end{equation*}

Lemma~\ref{lem:redux} allows us to assume that each determining vector in Equation~\eqref{eq:good} has $c_k = c$ and $a_0^{(k)} = 0$.  
Let $\beta^{(k)}=\beta(\underline a^{(k)})$.

For a given determining vector $\underline a^{(k)}$ of length $c+1$ we will refer to the vector $(a_1^{(k)},\dots, a_c^{(k)})$ as the corresponding \defi{degree vector}.  
We will now show that the degree vector of each $\underline a^{(k)}$ must share one common entry and use Corollary~\ref{cor:goodcor} and Lemma~\ref{lem:Koszul Lemma} to reduce to the $c-1$ case.  
Let $a_{1}$ be the minimum entry present in any degree vector of a determining vector in the summation.  
Let $m$ be the maximum number of times that $a_{1}$ appears in any degree vector.  
We recall from Remark~\ref{rmk:cibetti} that $\beta_{m,m\cdot a_1-m}^{(k)}$ represents the number of distinct sums of $m$ entries in the degree vector $\underline a^{(k)}$ that add to $m\cdot a_1$.  
If all degree vectors in the summation share entry $a_1$, then we are done.  
Otherwise, assume the degree vector of determining vector $\underline a^{(s)}$ does not contain $a_1$.  
Then $\beta(a^{(s)})_{m,m\cdot a_1-m}=0$.  
Since $m$ is a maximum, there are at most $p-1$ determining vectors $\underline a^{(k)}$ whose Betti diagrams can contribute at most $1$ to $\gamma_{m,m\cdot a_1-m}$.  
However, scaling by $p$ leaves a fractional entry in $\gamma_{m,m\cdot a_1-m}$, contradicting Remark~\ref{rmk:cibetti}.
Therefore, each $a^{(k)}$ shares entry $a_1$, and our proof is complete.  \end{proof}

\section{Decomposition algorithm}\label{sec:decomp}

In this section, we are concerned with arbitrary elements in the positive orthant of $\VV$, denoted $\Vpos$, because $\Vpos$ contains $\BQCI(R)$.  Elements of $\BQCI(R)$ are positive rational combinations of the Betti diagrams of complete intersections.  However, they need not have unique representation as such.

\begin{example}\label{ex:doubledecomp}
Some diagrams have multiple distinct decompositions into Betti diagrams of complete intersections.  We leave it to the reader to verify that $\gamma = \beta(0,2,2,2) + \beta(1,2,2,3)$ also decomposes as $\gamma = \beta(0,2,2,4) + \beta(1,1,2,2).$
These two decompositions lead to infinitely many decompositions of the form \[
\gamma = 
\frac{p}{q}\bigg(\beta(0,2,2,2) + \beta(1,2,2,3)\bigg) + \frac{q-p}{q}\bigg(\beta(0,2,2,4) + \beta(1,1,2,2)\bigg)
\] where $p \leq q \in \ZZ_{>0}$.
\end{example} 

The main goal of this section is to provide an algorithm for deciding when a diagram $\gamma$ is a member of $\BQCI(R)$ and, when it is, what its decompositions are. 

\begin{lemma}\label{lem:denominators} For each finite set of determining vectors $\{a^{(1)},\ldots,a^{(s)}\}$, there exists a positive integer $D$ such that, if $\gamma = \sum_{k = 1}^s a_k \beta(a^{(k)})$ for $a_k \in \Qpos$, then $D \gamma = \sum_{k = 1}^s b_k \beta(a^{(k)})$ where $b_k \in \Zpos$.
\end{lemma}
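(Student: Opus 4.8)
The plan is to treat this as a statement about a rational polyhedral cone. Write $v_k = \beta(a^{(k)})$ for $1 \le k \le s$. Since each $v_k$ has only finitely many nonzero entries, all of them are supported on a common finite set of coordinates, so I may regard the $v_k$ as integer vectors in a finite-dimensional subspace $\QQ^N \subseteq \VV$ and work there. The hypothesis $\gamma = \sum_k a_k v_k$ with $a_k \in \Qpos$ says precisely that $\gamma$ lies in the cone $C = \mathrm{cone}(v_1,\dots,v_s)$. I will also use that $\gamma$ is integer-valued, i.e. $\gamma \in \UU$; this is the case relevant to the algorithm, where $\gamma$ is a Betti diagram, and (as the one-generator example $\gamma = \tfrac1q v_1$ shows, where no single $D$ can clear $\tfrac1q$ for all $q$) some such integrality hypothesis is needed for a uniform $D$ to exist.

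First I would invoke Carath\'eodory's theorem for cones: since $\gamma \in C$, there is a subset $I \subseteq \{1,\dots,s\}$ with $\{v_k : k \in I\}$ linearly independent and nonnegative scalars $a_k$ so that $\gamma = \sum_{k\in I} a_k v_k$. The point of passing to such a subrepresentation is that these coefficients are now \emph{uniquely} determined by $\gamma$, so I can bound their denominators by linear algebra rather than by the (arbitrary) input coefficients.

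Next, for each linearly independent subset $I$, let $B_I$ be the integer matrix whose columns are the $v_k$, $k \in I$. Because $B_I$ has full column rank, the equation $B_I x = \gamma$ has a unique solution, and choosing an invertible maximal square submatrix and applying Cramer's rule shows there is a positive integer $d_I$ (a suitable minor of $B_I$), depending only on the $v_k$, such that every integer vector in the column span of $B_I$ has its coordinate vector in $\tfrac1{d_I}\ZZ^{|I|}$. In particular $d_I a_k \in \ZZ$ for each $k \in I$. Setting $D = \mathrm{lcm}\{\, d_I : I \subseteq \{1,\dots,s\},\ \{v_k\}_{k\in I}\ \text{linearly independent} \,\}$ --- a finite set, so $D$ is a well-defined positive integer depending only on the determining vectors --- gives $D a_k \in \ZZ$, and since $a_k \ge 0$ and $D > 0$ we get $D a_k \in \Zpos$. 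Defining $b_k = D a_k$ for $k \in I$ and $b_k = 0$ otherwise yields $D\gamma = \sum_{k=1}^s b_k v_k$ with $b_k \in \Zpos$, as required.

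The main obstacle is producing $D$ \emph{uniformly}, i.e. independent of $\gamma$ and of the particular input coefficients: those coefficients can have unbounded denominators, so one cannot simply clear them. The resolution is the two-step reduction above --- Carath\'eodory replaces the given representation by one supported on a linearly independent subset, of which there are only finitely many, and Cramer's rule then bounds the denominators of each such representation by a fixed minor; taking the lcm over the finitely many subsets produces a single $D$. Alternatively, one can argue via Gordan's Lemma: the monoid $C \cap \UU$ of lattice points in $C$ is finitely generated, and clearing denominators in the (finitely many) generators' representations in terms of the $v_k$ again yields a uniform $D$.
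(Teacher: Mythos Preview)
Your approach is essentially the paper's: embed in a finite-dimensional $\QQ^N$, use Carath\'eodory's theorem to pass to a linearly independent subrepresentation, bound the resulting coefficients' denominators via Cramer's rule, and take an LCM to make the bound uniform over the finitely many possible subsets. The paper packages the last two steps slightly differently---it augments $(\vec v_1\ \cdots\ \vec v_s)$ by all standard basis vectors to form an $N\times(s+N)$ integer matrix $A$, extends the Carath\'eodory subset to a square basis matrix $B$ using suitable $\vec e_{k_j}$, and takes $d'$ to be the LCM of the nonzero $N\times N$ minors of $A$---but this is the same idea with different bookkeeping.

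You are also right to flag the integrality hypothesis on $\gamma$. As literally stated (with $D$ depending only on the determining vectors), the lemma is false, by exactly your one-generator example. The paper's proof in fact first clears the denominators of $\gamma$ by a factor $d$ and sets $D=dd'$, so its $D$ tacitly depends on $\gamma$; in the intended application the input is already in $\Upos$, so $d=1$ and the discrepancy disappears. Your formulation, assuming $\gamma\in\UU$ from the outset, is the cleaner statement of what is actually needed and proved.
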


\begin{proof}  If $\gamma$ has fractional entries, multiply by $d$, the least common multiple of their denominators. Now $d\gamma \in \Upos$.

Identify $\beta(a^{(i)})$ with the column vector $\vec{v}_i$ of height $N = \pdim(\gamma) \times \reg(\gamma)$ in the intuitive way; similarly, identify $d\gamma$ with the column vector $\vec{v}$. Suppose $\vec{v} = a_1 \vec{v}_1 + \cdots + a_s \vec{v}_s$ for some $a_k \in \Qpos$. 
We use Carath\'eodory's theorem\footnote{We refer the reader to \cite{ziegler}*{Proposition 1.15} for a constructive proof of this result. Although the proposition is stated for $\mathbb{R}^N$, the proof goes through over $\mathbb{Q}^N$ as written.} to find linearly independent subset $\{\vec{v}_1,\dots,\vec{v}_t\}$ (after reindexing) of which $\vec{v}$ is a positive rational combination.  
We require linear independence in order to expand this set to a basis of $\QQ^N$ with standard basis vectors $\vec{e}_{k_1},\ldots,\vec{e}_{k_{N-t}}$.  
Then there exist coefficients $c_1,\ldots,c_N\in \QQ_{\geq 0}$ with $c_{t+1}=\dots = c_{N}=0$ for which \[
\vec{v} = c_1 \vec{v}_1 + \cdots + c_t \vec{v}_t + c_{t+1} \vec{e}_{k_1} + \cdots +c_{N}\vec{e}_{k_{N-t}}.
\]  Now, define the matrices 
\begin{align*}
A &= \begin{pmatrix} \vec{v}_1 & \cdots & \vec{v}_s \,\, | \,\, \vec{e}_{1} & \cdots & \vec{e}_{N} \end{pmatrix} \text{ and }\\
    B &= \begin{pmatrix} \vec{v}_1 & \cdots & \vec{v}_t \,\, | \,\, \vec{e}_{k_1} & \cdots & \vec{e}_{k_{N-t}} \end{pmatrix}.
\end{align*}
Let $d' \in \ZZ$ be the (positive) least common multiple of the nonzero $N\times N$ minors of $A$.  By construction, $B$ is an $N \times N$ submatrix of $A$ with linearly independent columns; hence $\det(B)$ is a factor of $d'$. 

We apply Cramer's rule to $B \vec{c} = \vec{v}$ to find that $c_k = \frac{\det(B_k)}{\det(B)}$. 
Define $b_k = d' c_k$ and observe that $b_k \in \Zpos$ because $\det(B)$ is a factor of $d'$, $d' \in \Zpos$, and $c_k$ is non-negative by assumption.
Hence \[
d'\vec{v} = b_1 \vec{v}_1 + \cdots + b_s \vec{v}_s
\]
is a non-negative, integral combination of $\vec{v}_1, \ldots, \vec{v}_s$. Letting $D = dd'$, we see that $D \gamma = \sum_{k=1}^s b_k \beta(a^{(k)})$, as desired.
\end{proof}

We provide an example to illustrate how to calculate $D$ for a diagram in $\BQCI(R)$.

\begin{example}\label{ex:findingD} Consider the diagram 
$\gamma = \begin{bmatrix} 1 &
 \scalebox{.8}{$
1/2$
} \\ 1 & \scalebox{.8}{$
3/2$
} \end{bmatrix} = \frac{1}{2}\beta(0,1)+\frac{1}{2}\beta(0,2)+\beta(1,1)$.  In order to use Lemma~\ref{lem:denominators}, we find the least common multiple of the denominators is $d = 2$.  Now we consider $2 \gamma = \begin{bmatrix}2 & 1 \\ 2& 3\end{bmatrix}$.

For each $\beta \in \{\beta(0,1),\beta(0,2),\beta(1,1)\}$, we identify $\beta$ with the column vector $[\beta_{0,0} \, \beta_{0,1} \, \beta_{1,1} \, \beta_{1,2}]^\top \in \QQ^4$.
Now \[
A = \left(\begin{array}{@{}ccc|cccc@{}}
1 & 1 & 0 & 1 & 0 & 0 & 0 \\ 
0 & 0 & 1 & 0 & 1 & 0 & 0 \\ 
1 & 0 & 0 & 0 & 0 & 1 & 0 \\
0 & 1 & 1 & 0 & 0 & 0 & 1
\end{array}\right).
\]

Since every nonzero $4\times4$ minor is $\pm1$, $d' = 1$.  Thus $D= dd' = 2$.
\end{example}

\begin{remark} In general, it is possible for $d' > 1$ to occur.  Consider \[\gamma = \frac{1}{4}\beta(0,2,2,2)+\frac{1}{4} \beta(1,2,2,3) + \frac{3}{4}\beta(0,2,2,4) +\frac{3}{4}\beta(1,1,2,2)\] (cf.\ Example~\ref{ex:doubledecomp}). Here, $\pdim(\gamma) = 4$ and $\reg(\gamma) = 6$, so $N = 24$.  We leave it to the reader to verify that $d = 1$ and $d' = 12$.\end{remark}

In order to check that a diagram $\gamma$ is within $\BQCI$, we may assume that we know several things.  
Indeed, we may assume that we know $\pdim(\gamma)$ and $\reg(\gamma)$ in order to input $\gamma$ into the algorithm;
we may assume that $\gamma$ has top left entry in row $0$ (otherwise, we instead consider $\gamma(-j)$ for appropriate $j$); 
and we may assume $\gamma \in \Upos$ by clearing denominators since $\gamma$ has finitely many nonzero, rational entries (as in the proof of Lemma~\ref{lem:denominators}).

\begin{algo}\label{algo:algo} Let $R = \kk[x_1,\ldots,x_n]$.
\begin{description}
\item[Input] An element $\gamma \in \Upos$ of width $w := \pdim(\gamma)$ (where $w \leq n$) and height $h := \reg(\gamma)$.
\item[Ouptut] All valid decompositions of $\gamma$ into the Betti diagrams of complete intersections (up to rational recombinations).  If no decompositions exist, then $\gamma \not \in \BQCI(R)$.
\item[Step 1]
Construct the lists 
\[C_0 := \{j \in \ZZ \, | \, \gamma_{0,j} \not = 0\}, \qquad C_1 := \{j \in \ZZ \, | \, \gamma_{1,1+j} \not = 0\}.\]
\item[Step 2]
Create the empty list $L$.  For each $j \in C_0$, for each $c$ for which $1 \leq c \leq w$, for each distinct determining vector $\underline{a} = (j,a_1,\ldots,a_c)$ where $j + a_i \in C_1$ and $j+\sum_{i = 1}^c a_i \leq h + c$
for all $i$, append $\underline{a}$ to $L$.
\item[Step 3]
Index the entries of $L$ so that $L = \{\underline{a}^{(1)},\ldots,\underline{a}^{(r)}\}$.  Define $D$ to be the positive integer from Lemma~\ref{lem:denominators} associated to $\beta(\underline{a}^{(1)}),\ldots,\beta(\underline{a}^{(r)})$.
\item[Step 4]
Define the parameter $m := \sum_{j=0}^h (D\gamma)_{0,j}$.
\item[Step 5]
For each $(s_1,\ldots,s_r) \in \Zpos^r$ satisfying $\sum_{i =1}^r s_i = m$, test whether $D\gamma = \sum_{i = 1}^r s_i \beta(\underline{a}^{(i)})$.  Whenever equality occurs, output the decomposition $\gamma = \frac{1}{D}\sum_{i = 1}^r s_i \beta(\underline{a}^{(i)})$.  If equality never occurs, output {\tt NULL}.
\end{description}
\end{algo}

Before presenting the proof that the algorithm terminates in factorial time and is correct, we provide a clarifying example.

\begin{example}\label{ex:algoex} Let $\gamma = \begin{bmatrix} 2 & 1 \\ 2 & 3 \end{bmatrix}$.  In Step 1, we populate the lists $C_0$ and $C_1$ by examining columns $0$ and $1$ respectively: $C_0 = \{0,1\}$, $C_1 = \{1,2\}$.  We also define the parameters $w$ and $h$, the width and height of $\gamma$, via $w = \pdim(\gamma) = 1$ and $h = \reg(\gamma) = 1$.

In Step 2, we populate $L$.  First, when $j = 0$, we append $(0,1)$ and $(0,2)$.  Next, when $j = 1$, we append $(1,1)$; note that we do not append $(2,1)$ because $2+1=3$ is not in the list $C_1$.  There are no vectors that are discarded for exceeding the height of $\gamma$ in this example, though this may happen.

In Step 3, we obtain $D = 1$.

In Step 4, we calculate $m = 4$.  Since $L$ has length $3$, we consider all triples $(s_1,s_2,s_3) \in \Zpos$ for which $s_1 + s_2 + s_3 = 4$.

In Step 5, we test whether $D\gamma = s_1 \beta(0,1) + s_2 \beta(0,2) + s_3 \beta(1,1)$.  There are fifteen such tuples to verify (we leave the discovery and verification of all such tuples the reader).  We then output all of the successful combinations for $D\gamma$; there is only one, and it is \[
1 \beta(0 , 1) + 1\beta(0 , 2) + 2 \beta(1 , 1).
\]
\end{example}

\begin{prop}\label{prop:runtime} Algorithm~\ref{algo:algo} runs in factorial time.
\end{prop}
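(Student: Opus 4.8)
The plan is to analyze each of the five steps of Algorithm~\ref{algo:algo} separately and show that the total number of elementary operations is bounded above by a factorial in the input parameters, with the enumeration in Step 5 (together with the minor computation implicit in Step 3) furnishing the dominant, factorial-sized contribution. Throughout, write $N = (w+1)(h+1)$ for the number of relevant entries of $\gamma$; since $w \leq n$ and $h = \reg(\gamma)$, the lists $C_0$ and $C_1$ each have length at most $h+1$, and the magnitude of every entry of $\gamma$ is bounded by the input.

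First I would dispatch the cheap steps. Step 1 scans columns $0$ and $1$ and so is linear in $N$. Step 4 sums the entries in column $0$ of $D\gamma$ and is likewise cheap once $D$ is known; it fixes $m = \sum_{j} (D\gamma)_{0,j}$, and this is the correct constraint because each $\beta(\underline a^{(i)})$ contributes exactly $1$ to the total of column $0$ (its only nonzero column-$0$ entry is $\beta_{0,a_0}(\underline a^{(i)}) = 1$), so any equality $D\gamma = \sum_i s_i \beta(\underline a^{(i)})$ forces $\sum_i s_i = m$. For Step 2, I would bound the length $r$ of $L$: a determining vector $(j,a_1,\dots,a_c)$ is specified by a choice of $j \in C_0$, a codimension $c \leq w$, and a nondecreasing $c$-tuple drawn from the at most $|C_1| \le h+1$ admissible values of each $a_i$, so
\[
r \;\le\; |C_0| \sum_{c=1}^{w} \binom{|C_1| + c - 1}{c} \;\le\; (h+1)\, w \binom{h+w}{w},
\]
which is finite and, being a binomial coefficient, is bounded by a factorial in $h+w$; building $L$ therefore costs at most factorial time.

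Next I would treat Step 3. Following Lemma~\ref{lem:denominators}, computing $D$ requires the least common multiple of the nonzero $N \times N$ minors of the $N \times (r+N)$ matrix $A$; there are $\binom{r+N}{N}$ such minors, each evaluated by a polynomial-time determinant, so Step 3 costs at most $\binom{r+N}{N}$ determinant evaluations, again bounded by a factorial. Finally, Step 5 is the canonical bottleneck: it ranges over every tuple $(s_1,\dots,s_r) \in \Zpos^r$ with $\sum_i s_i = m$, and the number of such weak compositions of $m$ into $r$ parts is exactly $\binom{m+r-1}{r-1}$; for each one, forming $\sum_i s_i \beta(\underline a^{(i)})$ and comparing it to $D\gamma$ costs $O(rN)$.

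To conclude, I would invoke the crude bound $\binom{a}{b} \le a!$ for $0 \le b \le a$, so that each of the three binomial counts above, $\binom{h+w}{w}$, $\binom{r+N}{N}$, and $\binom{m+r-1}{r-1}$, is at most a factorial of the input parameters; the entire runtime is then a finite sum of factorial-times-polynomial terms and is therefore factorial. The main obstacle is the bookkeeping: one must verify that $r$ and $m$ are genuinely bounded by functions of the input (so that the factorials are taken of finite quantities) and confirm that the per-iteration work in Steps 3 and 5 is polynomial, so that these polynomial factors do not push the runtime outside the factorial complexity class.
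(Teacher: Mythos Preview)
Your proposal is correct and follows essentially the same approach as the paper: a step-by-step analysis bounding Steps~2, 3, and~5 by binomial coefficients in the parameters $h$, $w$, $r$, $m$, and $N$, then observing that binomials are dominated by factorials. Your bounds differ only cosmetically---you count determining vectors in Step~2 via multisets $\binom{|C_1|+c-1}{c}$ rather than subsets, and you give a simpler $O(rN)$ per-tuple cost in Step~5---but the skeleton and the conclusion are the same.
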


\begin{proof} Observe that Step 1 finishes in finite time because $\gamma$ has finitely many nonzero entries within columns $0$ through $h$ and rows $0$ through $w$ so that populating $C_0$ and $C_1$ occurs in finite (linear) time. 

There are finitely many determining vectors that we consider adding to $L$ in Step 2.  In order to apply the criteria described in Step $2$ we consider at least $\sum_{i=1}^{h+1}\sum_{j=1}^{w+1} \binom{|C_1|}{j}$ determining vectors.  The worst case bound on $|C_1|$ is $h+1$.

Since $L$ is a finite list, the calculation of $D$ in Step 3 terminates in finite time.  Let $z = (w+1)(h+1)$, and recall that $r = |L|$.  Calculating the minors of a $z\times(r+z)$ matrix by expansion is proportional to $z^2\binom{r+z}{z}$.  The least common multiple calculation is possible in polynomial time using the Euclidean algorithm.

The number of tuples in Step 4 is finite (there are $\binom{r+m -1}{m-1}$ such tuples).  Checking equality for each of these tuples takes time proportional to $\binom{m2^n}{m}$, where $n = \sum_{j = 1}^{h} D\gamma_{1,1+j}$.

The combinatorial terms that describe Steps $2,3, $ and $4$ mean that the algorithm runs in time factorially proportional to the height and width of the input diagram. 
\end{proof}

Note that this algorithm is much slower than the original Boij-S\"oderberg decomposition algorithm which runs in polynomial time proportional to the height and width of $\gamma$.  Indeed, the pure diagram required at each step of that algorithm is prescribed by the ``top line'' of the diagram resulting from subtracting the rational multiple of the pure diagram obtained in the previous step. Finding the appropriate rational coefficient is a greedy process bounded by the codimension of $\gamma$.

Now we show that the algorithm is correct.
\begin{prop}\label{prop:correct} Algorithm~\ref{algo:algo} produces a positive rational decomposition of $\gamma$ if and only if $\gamma \in \BQCI(R)$.
\end{prop}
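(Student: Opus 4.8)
The plan is to prove the two implications separately; the forward direction is immediate and the reverse direction carries all of the content. For the forward direction, note that whenever Step 5 outputs something it outputs an identity $\gamma = \frac{1}{D}\sum_{i=1}^{r} s_i \beta(\underline{a}^{(i)})$ with $D \in \Zpos$ and each $s_i \in \Zpos$; since each coefficient $s_i/D$ lies in $\Qpos$ and each $\beta(\underline{a}^{(i)})$ with $\underline{a}^{(i)} \in L$ is by construction the Betti diagram of a complete intersection $R$-module, this exhibits $\gamma$ as a positive rational combination of Betti diagrams of complete intersections, so $\gamma \in \BQCI(R)$. For the reverse direction, suppose $\gamma \in \BQCI(R)$ and fix a decomposition $\gamma = \sum_k t_k \beta(\underline{b}^{(k)})$ with the $t_k \in \Qpos$ positive and the $\underline{b}^{(k)}$ distinct determining vectors.

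The heart of the argument, and the step I expect to be the main obstacle, is to verify that every $\underline{b}^{(k)}$ occurring here already satisfies the four membership criteria imposed in Step 2, so that each $\underline{b}^{(k)}$ lies in $L$. The feature that makes this possible is the \emph{absence of cancellation}: all coefficients $t_k$ and all Betti numbers are nonnegative, so an entry of $\gamma$ can vanish only if every summand contributes zero to it. Reading off Remark~\ref{rmk:cibetti} and Remark~\ref{rmk:Koszul}, I would check the four constraints in turn. The twist $b_0^{(k)}$ contributes positively to the column-$0$ entry of $\gamma$ in internal degree $b_0^{(k)}$, forcing $b_0^{(k)} \in C_0$. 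Each generator degree $b_\ell^{(k)}$ contributes positively to the column-$1$ entry in internal degree $b_0^{(k)} + b_\ell^{(k)}$, forcing this degree into $C_1$. The top Koszul term places a nonzero entry in column $c_k$, forcing $c_k \le w = \pdim(\gamma)$. Finally the bottom Koszul term places a nonzero entry in the row indexed by $b_0^{(k)} + \sum_\ell b_\ell^{(k)} - c_k = \reg(\beta(\underline{b}^{(k)}))$, forcing $b_0^{(k)} + \sum_\ell b_\ell^{(k)} \le h + c_k$ where $h = \reg(\gamma)$. These four inequalities are exactly the conditions enumerated in Step 2, so indeed each $\underline{b}^{(k)} \in L$. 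The only real care needed is the row/degree bookkeeping: matching internal degrees to the row indices of the displayed diagram, and identifying which Koszul term pins down the codimension and which pins down the regularity.

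With every needed determining vector inside $L = \{\underline{a}^{(1)},\dots,\underline{a}^{(r)}\}$, I would pad coefficients with zeros to write $\gamma = \sum_{i=1}^{r} q_i \beta(\underline{a}^{(i)})$ with $q_i \in \Qpos$, and apply Lemma~\ref{lem:denominators} to the finite set $L$. This produces the integer $D$ of Step 3 together with a tuple $(b_1,\dots,b_r) \in \Zpos^{r}$ satisfying $D\gamma = \sum_{i=1}^{r} b_i \beta(\underline{a}^{(i)})$. To conclude, it suffices to check that this tuple is among those examined in Step 5, that is, that $\sum_i b_i = m$. This follows by summing the column-$0$ entries of both sides: each $\beta(\underline{a}^{(i)})$ has a single nonzero column-$0$ entry, equal to $1$, so the column-$0$ sum of the right-hand side is $\sum_i b_i$, while that of the left-hand side is $\sum_{j=0}^{h}(D\gamma)_{0,j} = m$ as defined in Step 4. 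Hence $(b_1,\dots,b_r)$ is one of the finitely many tuples tested in Step 5, the equality test $D\gamma = \sum_i s_i \beta(\underline{a}^{(i)})$ succeeds on it, and the algorithm outputs a valid decomposition of $\gamma$, completing the reverse direction.
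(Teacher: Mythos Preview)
Your proof is correct and follows essentially the same architecture as the paper's: show the forward direction is trivial, show that every determining vector appearing in a decomposition of $\gamma$ satisfies the Step~2 membership criteria (so lies in $L$) by exploiting nonnegativity, invoke Lemma~\ref{lem:denominators} to pass to an integral decomposition of $D\gamma$, and then read off $\sum_i b_i = m$ from the column-$0$ sum.

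The one noteworthy difference is that the paper first invokes Carath\'eodory's theorem to reduce to a decomposition with linearly independent $\beta(\underline{a}^{(k)})$ and then argues that $Dq_k \in \Zpos$ for those specific coefficients, whereas you skip Carath\'eodory and simply take the integral tuple $(b_1,\dots,b_r)$ whose existence Lemma~\ref{lem:denominators} guarantees. Your route is slightly more economical for the proposition as stated: since you only need \emph{some} integral decomposition of $D\gamma$ supported on $L$, the existential conclusion of the lemma suffices, and there is no need to pin down which particular tuple arises. The paper's extra step is motivated by its stronger informal claim that the algorithm finds \emph{all} decompositions into linearly independent diagrams; for the bare ``if and only if'' statement, your shortcut is entirely adequate.
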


\begin{proof} Suppose the input $\gamma \in \Upos$ has a decomposition using the Betti diagrams of complete intersections.  Our algorithm should produce all possible decompositions up to rational linear combinations resulting in positive coefficients (see Example~\ref{ex:doubledecomp}), so it is enough to show that it produces all decompositions into linearly independent Betti diagrams of complete intersections.  We invoke Carath\'eodory's theorem again to assert that there exists a linearly independent subset of the diagrams in the decomposition of $\gamma$ so that \begin{equation}\label{eq:gammadec}
\gamma = \sum_{k = 1}^\ell q_k \beta(\underline a^{(k)})
\end{equation}
for strictly positive $q_k \in \Qpos$ and linearly independent $\beta(\underline a^{(k)})$. We show that the algorithm outputs this decomposition of $\gamma$. 

First we show that, after Step 2 is complete, $\underline a^{(k)} \in L$ for $k = 1,\ldots,\ell$. 

Since $q_k > 0$ for each $k$, $\gamma_{0,j} \not = 0$ whenever $\beta(\underline a^{(k)})_{0,j} \not = 0$.  
The latter occurs precisely when $j = \underline a^{(k)}_0$.  Thus $a^{(k)}_0 \in C_0$ after Step 1 is complete.
Similarly, $\gamma_{1,1+j} \not = 0$ whenever $\beta(\underline a^{(k)})_{1,j+1} \not = 0$.  
The latter occurs precisely when $j = \underline a^{(k)}_0 + \underline a^{(k)}_1$, and hence $\underline a^{(k)}_0 + \underline a^{(k)}_1 \in C_1$ after Step 1 is complete.

Let $c_k = \codim(\beta(\underline a^{(k)}))$ and $h_k = \reg(\beta(\underline a^{(k)}))$. Equation~\eqref{eq:gammadec} implies that $c_k \leq w$ and $h_k \leq h$ for each $k$.  
By Remark~\ref{rmk:Koszul}, $\beta(\underline a^{(k)})_{1,1 + a^{(k)}_0 + a^{(k)}_1} >0$, and thus $\gamma_{1, 1 + a^{(k)}_0 + a^{(k)}_1} > 0$. 
By the same remark, $h_k = a^{(k)}_0 + \cdots + a^{(k)}_{c_k} - c_k$.
This means that the desired criteria hold, and thus $\underline a^{(k)} \in L$ after Step 2 as claimed.

Next, given $L$, we show that the algorithm outputs Equation~\eqref{eq:gammadec}.

In Step 3, we calculate $D$ for Betti diagrams of determining vectors in $L$ via Lemma~\ref{lem:denominators}.
In Step 4, we define $m$ to be the sum of the entries in the $0$-th column of $D\gamma$.  Recall that the Betti diagram of a complete intersection has a single $1$ in column $0$.
Subsequently, if $D\gamma$ can be written as a positive integral combination of Betti diagrams of complete intersections, then there must be $m$ of them (not necessarily distinct). 
We then perform an exhaustive search all sums of $m$ Betti diagrams of complete intersections with determining vectors in $L$.  

By the construction of $D$ in Lemma~\ref{lem:denominators}, we have that $D q_k \in \Zpos$ for each $k$ (and that $\ell \leq m$), and therefore $D\gamma = \sum_{k = 1}^\ell D q_k \beta(\underline a^{(k)})$ is a positive integral combination that is output by Step 5 as desired.

Conversely, if $D\gamma$ is a positive, integral combination of Betti diagrams of complete intersections, it is evident that $\gamma$ is a positive, rational combination of the same Betti diagrams of complete intersections (just divide by $D$).\end{proof}

\begin{example}\label{ex:mikeex}
Recall from Example~\ref{ex:cole} the module $M$ with 
\[\beta(M) = \begin{bmatrix}
    2 & - & - \\
    - & 3 & -  \\
    - & 1 & 1  \\
    - & - & 1  \\ 
     \end{bmatrix}.\]
Algorithm~\ref{algo:algo} produces the decomposition $\beta(M) = \beta(0,2,2) + \beta(0,2,3)$, and hence $\beta(M) \in \BQCI(R)$.
\end{example} 

\section{Future work}\label{sec:futurework}

In their seminal work \cite{boij-sod1}, Boij and S\"{o}derberg construct a partial ordering on the extremal rays of the cone generated by all Betti diagrams.  
Then, they demonstrate that this ordering induces a simplicial structure on the cone.
It is from this simplicial structure that they produce an algorithm to decompose an element of the cone into a rational combination of maximal chain elements that correspond to extremal rays. 
In fact, we attempted to use the same partial ordering on determining vectors to produce a decomposition algorithm. We recall the definition of this partial order:
\begin{center}
For two determining vectors $\underline a = (a_0, a_1, \dots , a_n)$ and $\underline b = (b_0, b_1, \dots , b_n)$, \\
$\underline a \leq \underline b$ if $a_i \leq b_i$ for $0\leq i \leq n$. 
\end{center}
For us, this  choice of partial ordering has some nice properties. 
Indeed, all elements in a chain are linearly independent in $\VV$, and a proof of this closely follows the proof of \cite{boij-sod1}*{Proposition 2.9}.  
However, $\BQCI(R)$ need not be the union of spans of the maximal chains, as is demonstrated in the next example.

\begin{example}\label{ex:notchains}
Consider the diagram \[
\beta = \begin{bmatrix}
    2 & 1 & - & - \\
    - & - & - & - \\
    - & 2 & - & - \\
    - & 2 & 1 & - \\ 
    - & 1 & 2 & - \\
    - & - & 2 & - \\
    - & - & - & - \\
    - & - & 1 & 2\\
     \end{bmatrix}.\]
Examining $\beta$, we may assume that any determining vectors of diagrams in its decomposition have codimension 3 since the total Betti numbers $\beta_0 = \beta_3 = 2$.  
Furthermore, since the first and last columns have precisely one entry each, the determining vectors both have twist $0$ and degrees that sum to $10$. 
Using the first two steps of Algorithm~\ref{algo:algo} to populate $L$, we find that the possible determining vectors in $L$ that satisfy these additional criteria are $(0,1,4,5)$ and $(0,3,3,4)$.\footnote{
	Without using these observations, calculating $D$ requires finding the $32$-minors of a $32\times56$ matrix.  
	Even using ``fast'' determinant algorithms for each minor calculation, there are $\binom{56}{32}$ of these calculations to do.
	}  
In fact, $\beta = \beta(0,1,4,5) + \beta(0,3,3,4)$, so $\beta \in \BQCI(R)$. These two determining vectors are incomparable by the partial ordering above, so they cannot belong to the same chain. 
\end{example}

Nevertheless, we believe that there may be a partial ordering on the extremal rays of $\BQCI(R)$ that allows us to realize the geometric structure of the cone.

\begin{question} Is the cone $\BQCI(R)$ a geometric realization of a poset as in \cite{boij-sod1}?
\end{question}

\begin{bibdiv}
\begin{biblist}
\bib{boij-sod1}{article}{
    author = {Boij, Mats},
    author = {S{\"o}derberg, Jonas},
     title = {Graded {B}etti numbers of {C}ohen-{M}acaulay modules and the
              multiplicity conjecture},
   journal = {J. Lond. Math. Soc. (2)},
    volume = {78},
      year = {2008},
    number = {1},
     pages = {85--106},
}

\bib{eis-schrey1}{article}{
   author={Eisenbud, David},
   author={Schreyer, Frank-Olaf},
   title={Betti numbers of graded modules and cohomology of vector bundles},
   journal={J. Amer. Math. Soc.},
   volume={22},
   date={2009},
   number={3},
   pages={859--888},
   issn={0894-0347},
}

\bib{eisenbook}{book}{
   author={Eisenbud, David},
   title={Commutative algebra},
   series={Graduate Texts in Mathematics},
   volume={150},
   note={With a view toward algebraic geometry},
   publisher={Springer-Verlag, New York},
   date={1995},
   pages={xvi+785},
   isbn={0-387-94268-8},
   isbn={0-387-94269-6},
}

\bib{GJMRSW}{article}{
  author = {Gibbons, Courtney},
  author = {Jeffries, Jack},
  author = {Mayes, Sarah},
  author = {Raicu, Claudiu},
  author = {Stone, Branden},
  author = {White, Bryan},
  journal = {J. Commut. Algebra},
  number = {2},
  pages = {189--206},
  publisher = {Rocky Mountain Mathematics Consortium},
  title = {Non-simplicial decompositions of Betti diagrams of complete intersections},
  url = {http://dx.doi.org/10.1216/JCA-2015-7-2-189},
  volume = {7},
  year = {2015},
}

\bib{GibbonsWeb}{misc}{
  author={Gibbons, Courtney},
  title={Decomposition algorithm code},
  note={Available at \url{http://people.hamilton.edu/cgibbons/Macaulay2Code/}},
}

\bib{M2}{misc}{
    label={M2},
    author={Grayson, Daniel~R.},
    author={Stillman, Michael~E.},
    title = {Macaulay 2, a software system for research in algebraic geometry},
    note = {Available at \url{http://www.math.uiuc.edu/Macaulay2/}},
}

\bib{robert}{thesis}{
  author={Huben, Robert},
  title={Senior Thesis},
  journal={Undergraduate Thesis},
  place={Hamilton College},
  date={2015},
}

\bib{McCullough}{article}{
   author={McCullough, Jason},
   title={A polynomial bound on the regularity of an ideal in terms of half
   of the syzygies},
   journal={Math. Res. Lett.},
   volume={19},
   date={2012},
   number={3},
   pages={555--565},
   issn={1073-2780},
   review={\MR{2998139}},
   doi={10.4310/MRL.2012.v19.n3.a4},
}

\bib{peeva}{book}{
   author={Peeva, Irena},
   title={Graded syzygies},
   series={Algebra and Applications},
   volume={14},
   publisher={Springer-Verlag London, Ltd., London},
   date={2011},
   pages={xii+302},
   isbn={978-0-85729-176-9},
   review={\MR{2560561 (2011j:13015)}},
   doi={10.1007/978-0-85729-177-6},
}

\bib{fanny}{thesis}{
  author={Wyrick-Flax, Fanya},
  title={Algebraic relations and {B}oij--{S}\"oderberg theory},
  journal={Undergraduate Thesis},
  place={Bard College},
  date={2013},
}

\bib{ziegler}{book}{
   author={Ziegler, G{\"u}nter M.},
   title={Lectures on polytopes},
   series={Graduate Texts in Mathematics},
   volume={152},
   publisher={Springer-Verlag},
   place={New York},
   date={1995},
   pages={x+370},
}
\end{biblist}
\end{bibdiv}
\end{document}